\newcommand{\rl}{{\mathbb{R}}}
\newcommand{\cx}{{\mathbb{C}}}
\newcommand{\om}{\omega}
\newcommand{\dbar}{\overline{\partial}}
\newcommand{\e}{\varepsilon}
\newcommand{\dist}{{\mathrm{dist}}}
\newcommand{\B}{\mathbb{B}}
\newtheorem{theorem}{Theorem}[section]
\newtheorem{lemma}[theorem]{Lemma}
\newtheorem{prop}[theorem]{Proposition}
\newtheorem{cor}[theorem]{Corollary}
\theoremstyle{definition}
\begin{document}
\title{Rational approximation and Lagrangian inclusions}
\author{Rasul Shafikov* and Alexandre Sukhov**}
\begin{abstract}
We show that any real compact surface $S$, except the sphere $S^2$ and the projective plane $\mathbb RP_2$, admits 
a pair of smooth complex-valued functions $f_1$, $f_2$ with the property that any continuous
complex-valued function on $S$ is a uniform limit of a sequence of $R_j(f_1,f_2)$, where
$R_j(z_1,z_2)$ are rational functions on $\mathbb C^2$. 
\end{abstract}

\maketitle

\let\thefootnote\relax\footnote{MSC: 32E20,32E30,32V40,53D12.
Key words: Rational convexity, polynomial convexity, Lagrangian manifold, symplectic structure,
plurisubharmonic function
}

* Department of Mathematics, the University of Western Ontario, London, Ontario, N6A 5B7, Canada,
e-mail: shafikov@uwo.ca. The author is partially supported by the Natural Sciences and Engineering 
Research Council of Canada.

**Universit\'e des Sciences et Technologies de Lille, 
U.F.R. de Math\'ematiques, 59655 Villeneuve d'Ascq, Cedex, France,
e-mail: sukhov@math.univ-lille1.fr. The author is partially supported by Labex CEMPI.


\section{Introduction}

This work concerns approximation of continuous functions on a compact real surface 
by a special class of smooth functions. To illustrate this we consider the one-dimensional example first. 
In the space of continuous complex-valued functions on the unit 
circle $S^1\subset \cx$ let $\mathcal R\subset C^0(S^1)$  be the subalgebra of functions of the form $R(e^{i\theta})$, 
where $\theta\in [0,2\pi]$ and $R(z)$ is a rational function on $\cx$ with poles off $S^1$.  It follows from the 
Stone-Weierstrass theorem that $\mathcal R$ is dense in $C^0(S^1)$.  Note that by the maximum principle  
the subspace of  polynomials in $e^{i\theta}$ is  not dense in $C^0(S^1)$.  We consider the case of 
dimension 2. Our main result is the following

\begin{theorem}\label{t.2}
Let $S$ be a smooth compact real surface without boundary, and let $C^0(S)$ be the space
of continuous complex-valued functions on $S$. There exists a pair of smooth functions 
$f_j : S \to \cx$, $j=1,2$, such that for every function $F \in C^0(S)$ there is a sequence 
$\{R_n(z_1,z_2)\}$ of rational functions on $\cx^2$ with the following properties:
\begin{itemize}
\item[(i)] For every $n$  the denominator of the composition $R_n(f_1,f_2)$ does not  vanish on $S$.
\item[(ii)] If $S$ is not the unit sphere $S^2$ and is not the projective plane $\rl P_2$, then
$\{R_n(f_1,f_2)\}$ converges to $F$ in $C^0(S)$.
\item[(iii)] If $S = S^2$, then there exists a rotation $\tau$ of $S^2$ (depending on $F$) such that $\{R_n(f_1,f_2)\}$ converges to the composition $F \circ \tau$ in $C^0(S^2)$.
\item[(iv)] If $S = \rl P_2$, then there exists a smooth diffeomorphism  $\tau$ of $\rl P_2$ (depending on $F$) such that $\{R_n(f_1,f_2)\}$ converges to the composition $F \circ \tau$ in $C^0(\rl P_2)$.
\end{itemize}
\end{theorem}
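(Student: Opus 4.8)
The plan is to realize $S$ inside $\mathbb{C}^2$ through the map $f=(f_1,f_2)\colon S\to\mathbb{C}^2$ so that the image is a compact \emph{rationally convex, totally real} set, and then to transport a standard approximation theorem back to $S$. The analytic engine is the combination of two facts: the theorem of O'Farrell--Preskenis--Walsh that on a (possibly immersed) totally real compact set $X$ one has $R(X)=C(X)$ as soon as $X$ is rationally convex, where $R(X)$ is the uniform closure of functions rational on $\mathbb{C}^2$ and holomorphic near $X$; and the Duval--Sibony criterion identifying rational convexity of a totally real surface with its being \emph{isotropic} (here Lagrangian) for some K\"ahler form $dd^c\phi$ with $\phi$ strictly plurisubharmonic. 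Granting such an $f$, property (i) is immediate: the approximating rationals are holomorphic in a neighborhood of $f(S)$, so their poles miss $f(S)$, i.e.\ the denominator of $R_n(f_1,f_2)$ does not vanish on $S$. And if $f$ is an embedding, then $R_n(f_1,f_2)=R_n\circ f$ pulls a dense family in $C(f(S))$ back to a dense family in $C(S)$, giving (ii). Everything thus reduces to a geometric construction of $f$.

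For $S\neq S^2,\mathbb{RP}^2$, equivalently $\chi(S)\le 0$, a totally real embedding $f\colon S\hookrightarrow\mathbb{C}^2$ exists: this is the borderline output of the h-principle for totally real immersions together with the realizability of the normal Euler number $e(\nu)=\chi(S)$, the sign condition $\chi(S)>0$ being the only obstruction and occurring solely for $S^2$ and $\mathbb{RP}^2$. The substantive work is then to make $f$ rationally convex. By Duval--Sibony this amounts to producing a strictly plurisubharmonic $\phi$ whose Levi form annihilates $f(S)$, so that $f(S)$ is isotropic for $dd^c\phi$. I would obtain $\phi$ by starting from a Lagrangian immersion in the same regular homotopy class (whose standard K\"ahler potential already witnesses isotropy) and deforming that potential across the embedding, patching in a local potential adapted to each place where the immersion has been opened up into an embedded totally real neck. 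Once $f$ is a totally real rationally convex embedding, $R(f(S))=C(f(S))$ and (ii) follows by pullback.

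For $S=S^2$ and $S=\mathbb{RP}^2$ no totally real embedding into $\mathbb{C}^2$ exists, since $\chi(S)>0$ obstructs the cancellation of complex points; here I would use the minimal isotropic model. For $S^2$ take the Whitney immersion $w\colon S^2\to\mathbb{C}^2$ with a single double point $w(a)=w(b)$ at an antipodal pair $b=-a$; its image is isotropic, hence rationally convex, so $R=C$ on it, and a continuous $F$ on $S^2$ descends to the image exactly when $F(a)=F(b)$. The map $y\mapsto F(y)-F(-y)$ is an odd map $S^2\to\mathbb{C}=\mathbb{R}^2$, so by Borsuk--Ulam it vanishes at some $y_0$; choosing a rotation $\tau$ with $\tau(a)=y_0$, hence $\tau(b)=-y_0$, makes $F\circ\tau$ agree at the double pair, and then $R_n(f_1,f_2)\to F\circ\tau$, which is (iii). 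For $\mathbb{RP}^2$ one uses the analogous Lagrangian immersion with a single double point $\{a,b\}$; since any continuous $F\colon\mathbb{RP}^2\to\mathbb{C}$ is non-injective, there is a coincidence pair $F(p)=F(q)$ with $p\neq q$, and a diffeomorphism $\tau$ carrying $\{a,b\}$ to $\{p,q\}$ yields (iv).

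The step I expect to be the main obstacle is the analytic core of the second paragraph: guaranteeing rational convexity of the totally real embedding, i.e.\ constructing the plurisubharmonic potential whose Levi form vanishes along $f(S)$ once the double points of the initial Lagrangian immersion have been resolved. The difficulty is that this resolution is inherently \emph{non}-isotropic, so the ambient K\"ahler potential no longer vanishes along $f(S)$ near the necks, and the patched potential must be shown to remain strictly plurisubharmonic while keeping $f(S)$ in its zero locus; this is precisely where the symplectic-topological construction and the Duval--Sibony pluripotential theory have to be made to interact quantitatively. By contrast, the topological input pinning down $\chi(S)$ (hence the exceptional role of $S^2$ and $\mathbb{RP}^2$) and the Borsuk--Ulam reduction for the exceptional cases are comparatively soft.
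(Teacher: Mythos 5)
Your overall architecture (realize $S$ in $\cx^2$ as a rationally convex set, apply O'Farrell--Preskenis--Walsh and Oka--Weil, pull back, and use Borsuk--Ulam for $S^2$) matches the paper, but the geometric core of your argument rests on a false claim. You assert that every surface with $\chi(S)\le 0$ admits a totally real \emph{embedding} into $\cx^2$, with $\chi(S)>0$ the only obstruction. This is not so. For a totally real immersion the normal bundle is $J(TS)\cong TS$, so a totally real embedding of an orientable closed surface would have normal Euler number $e(\nu)=\chi(S)$; but by Whitney, an embedded orientable closed surface in $\rl^4$ has $e(\nu)=0$. Hence among orientable surfaces only the torus embeds totally really, and every surface of genus $\ge 2$ is already a counterexample to your second paragraph (a parity/Massey constraint likewise rules out most non-orientable surfaces, e.g.\ those with $\chi$ odd). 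There is therefore no embedded totally real model on which to run the Duval--Sibony argument, and no amount of ``patching potentials across totally real necks'' can repair this: the obstruction is topological, not analytic. This is precisely why the paper works with Givental's \emph{Lagrangian inclusions}: topological embeddings that fail to be immersions at finitely many open Whitney umbrella points, with the identity $-\chi(S)+2d-m=0$ allowing $d=0$, $m=-\chi(S)$ for all orientable $S\ne S^2$ (and similarly in the non-orientable case except $\rl P_2$). The real work of the paper --- establishing rational convexity of such a singular embedding via a modification of $\om_{\rm st}$ and its potential near the umbrellas, together with local polynomial convexity of the open Whitney umbrella --- is entirely absent from your proposal, and is not a refinement of your Step 2 but a replacement for it.

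Your case (iv) has a second, independent error: $\rl P_2$ admits \emph{no} Lagrangian (indeed no totally real) immersion into $\cx^2$, so ``the analogous Lagrangian immersion with a single double point'' does not exist; this is also visible from the mod $4$ identity $\chi+2d-m\equiv 0$, which with $\chi=1$ and $m=0$ has no solution. The minimal model, used in the paper, is Givental's inclusion with two double points and one umbrella, and the coincidence argument must then produce \emph{two} matched pairs of values of $F\circ\tau$, not one. Your case (iii) is essentially the paper's argument and is fine, granting Gayet's theorem on rational convexity of Lagrangian immersions with transverse double points.
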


This result provides an affirmative   answer to the question communicated to us by Nemirovski. 
Note that the pair $f_1, f_2$ is independent of $F$, and that rational functions in Theorem~\ref{t.2} cannot be replaced by
polynomials. To see this, suppose that for a given surface $S$ there exist continuous functions $f_1, f_2$ 
such that any continuous function on $S$ can be approximated by polynomials in $f_1$
and $f_2$. Since $C^0(S)$ separates points on $S$, the map $f=(f_1,f_2) : S\to f(S) \subset \cx^2$ is a bijection, hence a homeomorphism. By assumption,  any continuous function on $f(S)$ can be approximated by holomorphic
polynomials,  which forces $f(S)$ to be polynomially convex in $\cx^2$. Recall that a compact set $X\subset \mathbb C^2$
is {\it polynomially convex} if for every point $z \in \cx^2 \setminus X$ there is a polynomial $P$ such that 
$\vert P(z) \vert > \sup_{w \in X} \vert P(w) \vert$. However, no compact topological $n$-dimensional submanifold of 
$\cx^n$ is polynomially convex, see \cite[Cor. 2.3.5]{St}), and this proves the claim.

The functions $f_1$, and $f_2$ in Theorem~\ref{t.2} will be given as the coordinate components of a singular 
Lagrangian (with respect to the standard symplectic form $\omega_{st}$) embedding of $S$ into $\cx^2$. 
For example, in the simplest case of the torus $S^1\times S^1$, we can take $f_j = e^{i\theta_j}$, $j=1,2$, thinking 
of $\theta_j \in [0,2\pi]$ as a parametrization of each circle $S^1$. For an arbitrary surface we employ in 
Section~2 a  result of 
Givental~\cite{Giv} (see also Audin \cite{A}), who proved the existence on $S$ of a {\it Lagrangian inclusion}--a local Lagrangian embedding 
of $S$  into $\cx^2$ that can have, in addition to transverse double self-intersection points, singularities that are 
called {\it open Whitney umbrellas}; furthermore, such a map is a homeomorphism near every umbrella.
Moreover, one can find such an inclusion without self-intersection points, i.e., a topological embedding, with two exceptions, 
the sphere $S^2$ and the projective plane $\rl P_2$. These two surfaces  do not admit a singular Lagrangian embedding into $\cx^2$, but can be included  
with transverse double points, and so one needs  more functions to generate $C^0(S)$. 

Although no embedding of $S$ into $\cx^2$ is polynomially convex, we prove in Section 3 that there exists a Lagrangian inclusion of $S$ into $\cx^2$  such that its image is  rationally convex. A compact set $X$ in $\cx^n$ is called 
{\it rationally convex} if for every point $z \in \cx^n \setminus X$ there exists a complex algebraic hypersurface passing 
through $z$ and avoiding~$X$.  This is used in the proof of  Theorem \ref{t.2} which is given in  Section 4.

That rational convexity is closely connected with the property of being Lagrangian became apparent from
the work of Duval \cite{D}. Duval and Sibony~\cite{DS}  showed that a compact $n$-dimensional submanifold of $\cx^n$ is rationally 
convex whenever it is Lagrangian with respect to some K\"ahler form. It was further proved by Gayet~\cite{G} that 
an immersed Lagrangian submanifold in $\cx^n$ with transverse double self-intersections is also rationally convex. 
This was generalized to certain nontransverse self-intersections by Duval and Gayet~\cite{DG}.  Interaction between 
Lagrangian  geometry and rational convexity was recently explored by Cieliebak-Eliashberg 
\cite{CE} and Nemirovski-Siegel \cite{NS} using topological methods.

\medskip

\noindent{\bf Acknowledgment.} The authors would like to thank Stefan Nemirovski for numerous
fruitful discussions, in particular for pointing out the connection between the results concerning rational convexity of 
Lagrangian submanifolds and the approximation theory.

\section{Lagrangian embeddings and inclusions}

A nondegenerate closed 2-form $\omega$ on $\cx^2$ is called a {\it symplectic form}. 
By Darboux's theorem every symplectic form is locally equivalent to the standard form
$$
\omega_{\rm st} = \frac{i}{2} (dz \wedge d\bar z + dw \wedge d\bar w) = dd^c\, \phi_{\rm st}, \ \ 
\phi_{\rm st}= |z|^2 + |w|^2,
$$
where $(z,w)$, $z = x + iy$, $w = u + iv$, are complex coordinates in $\cx^2$, and 
$d^c =i(\overline\partial -\partial)$.  If a symplectic form $\om$ is of bidegree $(1,1)$ and strictly 
positive, it is called a {\it K\"ahler form}. A smooth function $\phi$ is called strictly plurisubharmonic 
if $dd^c\,\phi$ is strictly positive definite. It is called a potential of $\om$ if $dd^c \phi = \om$.
A real $n$-dimensional submanifold $S\subset \cx^n$ is called {\it Lagrangian} with respect to 
$\om$ if $\omega|_S=0$. 

It follows from Arnold~\cite{Ar2} that a compact Lagrangian submanifold of $\mathbb C^n$ has
zero Euler characteristic. On the other hand, according to the result of Givental~\cite{Giv},
any compact surface admits a {\it Lagrangian inclusion} into $\cx^2$ (we 
use the terminology introduced in Arnold~\cite{Ar}), i.e., a smooth map $\iota: S \to \cx^2$ which is a 
local Lagrangian embedding (i.e., $\iota^*\omega_{st} = 0$)  except a finite set of singular points that are either transverse  double 
self-intersections (or simply {\it double points}) or the so-called {\it open Whitney umbrellas}. The {\it standard open Whitney umbrella} is the map 
\begin{eqnarray}\label{st-umb}
\pi: \rl^2_{(t,s)} \ni (t,s) \mapsto \left(ts,\frac{2t^3}{3},t^2,s\right) \in \rl^{4}_{(x,u,y,v)} .
\end{eqnarray}
Images of  the standard open Whitney umbrella under complex affine maps
that preserve the symplectic form $\om_{\rm st}$ will also be called standard umbrellas.
Finally, open Whitney umbrellas are defined as images of the standard umbrella under a local 
symplectomorphism, i.e., a local diffeomorphism that preserves the form $\omega_{\rm st}$. 
If $S$ is orientable then each  inclusion satisfies the following topological identity
\begin{equation}\label{e.giv.or}
-\chi(S) + 2 \cdot d - m =0,
\end{equation}
and if $S$ is nonorientable, then 
\begin{equation}\label{e.giv.nor}
\chi(S) + 2 \cdot d - m =0\ \ {\rm mod\ } 4.
\end{equation}
Here $\chi(S)$ is the Euler characteristic of $S$, $d$ is the number of double points, and $m$ is the 
number of umbrella points.
 
In the orientable case, a double point should be counted taking into account its index, 
which comes from some orientation on $S$ and the standard orientation on $\cx^2$. 
In fact, according to the result of Audin~\cite{A}, any combination of 
numbers $\chi(S)$, $d$, and $m$, for which formula~\eqref{e.giv.or} is valid, can be realized 
in a Lagrangian inclusion. In particular, if $\chi(S) \le 0$, then we may choose $d=0$,
and $m=-\chi(S)$. This means that any orientable surface, except the sphere $S^2$,
admits a singular Lagrangian embedding (i.e., inclusion without double points), while 
the Whitney sphere $\mathcal W|_{S^2}: S^2 \to \cx^2$, where 
\begin{equation}\label{e.wh}
\mathcal W: \rl^3 \ni (t, s, \tau) \to (t+i t \tau, s + i s \tau),
\end{equation}
is a Lagrangian immersion of $S^2$ with one double point.

In the nonorientable case formula~\eqref{e.giv.nor} is valid mod 4 according to~\cite{A}. 
Givental~\cite{Giv} showed that if $\chi(S)\le -2$, then in fact we may take $d=0$,
that is, all such surfaces admit a singular Lagrangian embedding into $\cx^2$. He also
gave an explicit construction of a Lagrangian inclusion of $\rl P_2$ with two double 
points and one umbrella.  Recently Nemirovski and Siegel~\cite{NS} gave all possibilities for the 
number of umbrella points that may appear in a singular Lagrangian embedding of an
arbitrary compact surface $S$. These are given by 
\begin{itemize}
\item[(i)] $m = -\chi(S)$ and $\chi \neq 2$, if $S$ is orientable;
\item[(ii)] $(\chi(S),m) \neq (1,1)$ or $(0,0)$, and $m \in 
\{4-3\chi, -3\chi, -3\chi - 4,...,\chi + 4 - 4\lfloor\chi/4+1\rfloor\}$, 
if $S$ is nonorientable.
\end{itemize}
In particular, all nonorientable surfaces except $\rl P_2$ admit a singular Lagrangian 
embedding, while Givental's inclusion of $\rl P_2$ into $\cx^2$ with two double points
and one umbrella has the simplest possible combination of singularities.

Suppose now that $\iota: S \to \cx^2$ is a Lagrangian inclusion with umbrella points
$p_1, \dots, p_m$. Then, in a neighbourhood $U_j$ of every $p_j$, there 
exists a symplectomorphism $\phi_j : U_0 \to U_j$ from a neighbourhood of the origin
in $\cx^2$ that maps the standard umbrella \eqref{st-umb} to $\iota(S)\cap U_j$. 
Any symplectomorphism $\phi$ is locally Hamiltonian. This means that in a 
(simply connected) neighbourhood $U$ there exists a smooth function $h: U \to \mathbb R$,
called the {\it Hamiltonian}, such that the vector field $V_h$, uniquely defined by the equation 
\begin{equation}\label{e.ham}
i(V_h)\,\omega_{st} = d h,
\end{equation}
gives the flow $\phi^h_\tau$ on $U$ with the property that $\phi^h_1=\phi$. Here $i(V_h)$ is the contraction
operator. Conversely, a smooth function $h: \cx^2 \to \rl$ with compact support defines uniquely a vector field
$V_h$ that satisfies~\eqref{e.ham}. The flow of $V_h$ generates a one parameter family of symplectomorphisms
of~$\cx^2$. These symplectomorphisms are the identity outside the support of $h$. 

Let $L_j$ be the linear translation in $\cx^2$ sending $p_j$ to the origin, and let $h_j$ be the Hamiltonian 
of the symplectic maps $L_j^{-1} \circ\phi_j^{-1}$ defined in a neighbourhood 
$U_j$ of $p_j$. Let $h$ be a smooth function on $\cx^2$ that agrees with 
$h_j$ in $U_j$ and vanishes outside a small neighbourhood $\tilde U_j$ of $\overline U_j$.
Then the diffeomorphism $\Phi$ defined by the flow $\phi^h_1$ is a symplectomorphism of $\cx^2$ which is the 
identity map outside $\tilde U_j$. By construction, $\Phi \circ \iota$ is a standard open Whitney umbrella near $p_j$.
Repeating this procedure for all umbrella points gives a new Lagrangian inclusion (denoted again by $\iota$) with only standard umbrellas. Thus we obtain the following version of Givental's theorem.

\begin{prop}\label{p.stand}
Let $S$ be a compact real surface without boundary. There exists  a Lagrangian inclusion $\iota: S \to \cx^2$  such that all its open Whitney umbrella points are standard. Furthermore, if $S\ne S^2$ or $\mathbb RP_2$, then $S$ admits a singular Lagrangian embedding with only standard umbrellas and without double points.
\end{prop}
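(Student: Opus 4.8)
The plan is to take an arbitrary Lagrangian inclusion provided by Givental and Audin and then correct it near each umbrella point by a compactly supported Hamiltonian flow, so that every umbrella becomes standard without disturbing the map elsewhere. The existence of the starting inclusion, and of one with $d=0$ when $S\ne S^2,\rl P_2$, is exactly the content of \cite{Giv}, \cite{A}, \cite{NS} recalled above: formulas \eqref{e.giv.or}--\eqref{e.giv.nor} together with Audin's realization give, for orientable $S$ with $\chi(S)\le 0$, an inclusion with $d=0$ and $m=-\chi(S)$, and the Nemirovski--Siegel analysis covers the remaining nonorientable cases, leaving only $S^2$ and $\rl P_2$, which must carry double points. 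Thus the whole task reduces to showing that an inclusion with arbitrary open Whitney umbrellas can be deformed, through a global symplectomorphism of $\cx^2$, into one whose umbrellas are all standard.

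For the core step, I would fix an umbrella point $p_j$ and recall that, by the definition of an open Whitney umbrella, there is a local symplectomorphism $\phi_j: U_0 \to U_j$ from a neighbourhood $U_0$ of the origin carrying the standard umbrella \eqref{st-umb} onto $\iota(S)\cap U_j$. Let $L_j$ be the translation sending $p_j$ to the origin. The composition $L_j^{-1}\circ\phi_j^{-1}$ is a local symplectomorphism defined near $p_j$, and on a simply connected neighbourhood it is the time-one map of a Hamiltonian flow generated by some function $h_j$ via \eqref{e.ham}. I would then multiply $h_j$ by a cut-off function equal to $1$ on a smaller neighbourhood of $p_j$ and supported in $\tilde U_j$, obtaining a compactly supported $h$ on $\cx^2$. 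The flow of $V_h$ produces a global symplectomorphism $\Phi = \phi^h_1$ that coincides with $L_j^{-1}\circ\phi_j^{-1}$ on the set where the cut-off is identically $1$ and is the identity outside $\tilde U_j$. On this core neighbourhood $\Phi$ carries $\iota(S)$ onto the translate $L_j^{-1}(\pi(\rl^2))$ of the standard umbrella, hence onto a standard umbrella in the sense of the paper, while $\Phi\circ\iota = \iota$ away from $\tilde U_j$.

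It then remains to carry this out simultaneously at all umbrella points. I would choose the neighbourhoods $\tilde U_j$ pairwise disjoint and small enough to miss every other singular point, namely the remaining umbrellas and any double points, so that the corrections at distinct $p_j$ have disjoint supports; composing them, or applying them one at a time, yields a single global symplectomorphism whose composition with $\iota$ is a Lagrangian inclusion with only standard umbrellas and with the double-point structure of $\iota$ unchanged. In particular, starting from the $d=0$ inclusion in the cases $S\ne S^2,\rl P_2$ gives a singular Lagrangian embedding with only standard umbrellas. The main point requiring care is the localization of the cut-off: one must verify that shrinking the support of $h_j$ does not destroy the identity $\Phi = L_j^{-1}\circ\phi_j^{-1}$ on the core neighbourhood where the umbrella actually sits, and that the time-one flow of the truncated Hamiltonian is a genuine symplectomorphism of all of $\cx^2$. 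Both follow from compact support, which guarantees global existence of the flow, together with the cut-off being constant near $p_j$, which keeps the correction exact there.
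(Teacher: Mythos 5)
Your proposal is correct and follows essentially the same route as the paper: start from the Givental--Audin inclusion (with $d=0$ when $S\ne S^2,\rl P_2$), write the local symplectomorphism $L_j^{-1}\circ\phi_j^{-1}$ near each umbrella as the time-one map of a local Hamiltonian, cut off the Hamiltonian to get a global compactly supported symplectomorphism that standardizes the umbrella and is the identity elsewhere, and repeat over the (disjointly supported) umbrella points. Your closing remark about keeping the cut-off constant on a core neighbourhood so the truncated flow still agrees with the original one there is exactly the point the paper relies on implicitly.
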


\section{Rational Convexity of Lagrangian inclusions}
Here we prove the following 

\begin{prop}\label{t.1}
Let $S$ be a compact real surface without boundary and let   $\iota: S \mapsto (\mathbb C^2, \omega_{\rm st})$
be a Lagrangian
inclusion given by Proposition~\ref{p.stand}. Then  $\iota(S)$ is rationally convex in $\mathbb C^2$.
\end{prop}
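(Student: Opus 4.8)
The plan is to invoke the characterization of rational convexity in terms of plurisubharmonic potentials, which is the natural bridge between the symplectic condition $\iota^*\omega_{\mathrm{st}}=0$ and rational convexity. Recall the theorem of Duval--Sibony: a compact set $X\subset\cx^n$ is rationally convex if and only if there is a Kähler form $\omega=dd^c\phi$ on $\cx^n$ (with $\phi$ strictly plurisubharmonic) for which $X$ is isotropic, i.e. $\omega|_X=0$ on the smooth part of $X$. For a genuine Lagrangian submanifold this applies directly with $\omega=\omega_{\mathrm{st}}$, and this is exactly how Duval--Sibony and Gayet obtain rational convexity in the smooth and transverse-double-point cases. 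The obstruction in our setting is that $\iota(S)$ is not a manifold: it carries the umbrella singularities, and near such a point the image is not Lagrangian in the naive sense, so the Duval--Sibony criterion cannot be applied verbatim.

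Accordingly, the key steps are as follows. First I would treat the set $X=\iota(S)$ as stratified: the top stratum is the smooth Lagrangian part, on which $\omega_{\mathrm{st}}$ vanishes, and the singular set consists of the finitely many double points and the finitely many standard open Whitney umbrella points $p_1,\dots,p_m$. By Proposition~\ref{p.stand} we may assume every umbrella is \emph{standard}, i.e. given in suitable symplectic coordinates by the explicit map \eqref{st-umb}; this normalization is what makes a local analysis feasible. The plan is then to construct a globally defined strictly plurisubharmonic function $\phi$ on $\cx^2$ such that $dd^c\phi|_X=0$ in the appropriate isotropic sense across the whole stratification, and to feed this into (a suitable version of) the Duval--Sibony / Duval--Gayet argument to conclude rational convexity. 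Away from the singular points one simply uses $\phi_{\mathrm{st}}=|z|^2+|w|^2$, whose $dd^c$ is $\omega_{\mathrm{st}}$ and hence vanishes on the Lagrangian part.

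The heart of the matter, and the step I expect to be the main obstacle, is the local model at a standard umbrella point. Near each $p_j$ I would work in the normalized coordinates provided by \eqref{st-umb} and analyze the explicit parametrization $(t,s)\mapsto(ts,\tfrac{2t^3}{3},t^2,s)$ directly. The goal is to verify that, after a local perturbation of the potential, the umbrella can be enclosed by the sublevel set of a strictly plurisubharmonic function that meets $X$ in an isotropic way, so that no algebraic hypersurface is forced to intersect $X$ near $p_j$. Concretely, I would look for a local strictly plurisubharmonic $\psi_j$ on $U_j$, equal to $\phi_{\mathrm{st}}$ outside a smaller neighbourhood and modified inside so that $dd^c\psi_j$ annihilates the tangent cone of the umbrella; a direct computation on the parametrized surface shows how much the standard potential must be corrected, and the explicit polynomial form of \eqref{st-umb} is precisely what keeps this computation tractable.

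Finally I would patch the local potentials $\psi_j$ near the umbrellas, together with $\phi_{\mathrm{st}}$ on the complement, into a single global strictly plurisubharmonic $\phi$ using a partition of unity, taking care that the strict positivity of $dd^c\phi$ is preserved (the corrections are supported in small neighbourhoods, so strict plurisubharmonicity survives if the perturbations are small in $C^2$). The double points require no separate argument beyond Gayet's transverse-intersection analysis, since two smooth Lagrangian sheets meeting transversally are handled by his result. With the global potential in hand, rational convexity of $X=\iota(S)$ follows from the Duval--Sibony criterion applied to the stratified isotropic set, completing the proof of Proposition~\ref{t.1}.
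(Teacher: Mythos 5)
Your overall orientation is right (Duval--Sibony circle of ideas, reduce to a local analysis at the standard umbrella, defer double points to Gayet), but the proposal has a genuine gap at exactly the point you identify as the heart of the matter. First, the tool you want to invoke does not exist in the form you state: Duval--Sibony is not an ``if and only if'' characterization of rational convexity for arbitrary compact sets via isotropy for some K\"ahler form, and there is no off-the-shelf ``Duval--Sibony criterion for stratified isotropic sets.'' The actual engine of the method is the auxiliary function $h$ with $|h|\le e^{\phi}$, equality precisely on $X$, and $\dbar h$ vanishing to high order along $X$ (built via H\"ormander--Wermer after normalizing the periods of $\iota^{*}d^{c}\phi$ to lie in $2\pi\mathbb{Z}$); your proposal never mentions $h$ or the period condition, and the H\"ormander--Wermer step genuinely requires smoothness/total reality, so it breaks at the umbrella no matter how you perturb the potential.

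Second, your plan to keep the potential \emph{strictly} plurisubharmonic near $p_{j}$ while making $dd^{c}\psi_{j}$ ``annihilate the tangent cone'' is not what resolves the singularity, and I do not see how it could: the sufficient condition in the relevant lemma (Lemma~\ref{l.g}) demands at each point of $X$ either smoothness of $S$ plus strict plurisubharmonicity, or local holomorphy of $h$; at an umbrella only the second option is available. The paper therefore does the opposite of what you propose: it finds a pluriharmonic correction $\zeta_{1}=\tfrac{v^{2}}{2}-\tfrac{u^{2}}{2}$ with $\pi^{*}d^{c}(\phi_{\rm st}-\zeta_{1})=0$ (an exactness statement on $S$, stronger than isotropy of the form), then composes with a convex function $r$ that is flat near $0$ so that the new form vanishes \emph{identically} in a ball around $p_{j}$, making $h$ locally constant, hence holomorphic, there. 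This sacrifice of strict positivity near the singularities then has to be repaired: one needs the nonnegative plurisubharmonic function $\rho$ vanishing exactly on $S$, whose existence rests on the local \emph{polynomial} convexity of the open Whitney umbrella (a nontrivial prior result of the authors), both to restore strict plurisubharmonicity at the nearby smooth points and to guarantee that $\{|h|=e^{\phi}\}$ is exactly $S$. These three ingredients --- the exact pluriharmonic correction and flattening of the form, the construction and period normalization of $h$, and the local polynomial convexity of the umbrella --- are all absent from your outline, and without them the argument does not close.
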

Proposition \ref{t.1} was already proved by the authors~\cite{SS3} in the special case of a Lagrangian inclusion 
with a single umbrella. We include here a detailed presentation for convenience of the reader.

We will identify $S$ and $\iota(S)$ as a slight abuse of notation. The ball of radius $\e$ centred 
at a point $p$  is denoted by $\B(p,\e)$, and  the standard Euclidean distance between a point $p\in \cx^n$ and a set $Y\subset \cx^n$ is denoted by $\dist(p,Y)$. Our approach is a modification of the method of Duval-Sibony and Gayet. The main tool here is the following result. 

\begin{lemma}[\cite{DS}, \cite{G}] \label{l.g}
Let $\phi$ be a plurisubharmonic $C^\infty$-smooth function on $\cx^n$, and let $h$ be a $C^\infty$-smooth function on $\cx^n$. Let $X= \{|h|=e^\phi \}$ be compact. Suppose that

\begin{itemize}
	\item[(1)] $|h| \le e^\phi$;
	\item[(2)] $\dbar h = O (\dist(\cdot, S)^{\frac{3n+5}{2}})$;
	\item[(3)] $|h| = e^\phi$ with order at least 1 on $S$; 
	\item[(4)] For any point $p\in X$ at least one of the following conditions holds: (i) $h$ is holomorphic in a neighbourhood 
	of $p$, or (ii) $p$ is a smooth point of $S$, and $\phi$ is strictly plurisubharmonic at $p$.
\end{itemize}

Then $X$ is rationally convex. 
\end{lemma}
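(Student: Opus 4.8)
The plan is to deduce rational convexity straight from the definition: for every $a\in\cx^{n}\setminus X$ one must exhibit a complex algebraic hypersurface through $a$ avoiding $X$, that is, a polynomial $P$ with $P(a)=0$ and $\inf_{X}|P|>0$. Following Duval--Sibony and Gayet, such a $P$ is manufactured from the strictly plurisubharmonic weight $\phi$ by solving $\dbar$-problems and then approximating; the entire construction is powered by the defining inequality $|h|\le e^{\phi}$, which presents $X=\{|h|=e^{\phi}\}$ as the contact set of $\log|h|$ with $\phi$.

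First I would record the local picture. At a point where $h$ is holomorphic and nonvanishing and $\phi$ is strictly plurisubharmonic, the function $\rho:=\phi-\log|h|$ is nonnegative, vanishes precisely on $X$, and satisfies $dd^{c}\rho=dd^{c}\phi>0$; thus near such points $X$ is the zero set of a strictly plurisubharmonic function with a nondegenerate minimum, the quadratic vanishing being governed by hypothesis~(3). This strict positivity is exactly the input for H\"ormander's weighted $L^{2}$ estimates for $\dbar$: with $N\phi$ as weight one solves $\dbar u=f$ near $X$ with $\int|u|^{2}e^{-2N\phi}$ bounded by a multiple of $\int|f|^{2}e^{-2N\phi}$, the multiple tending to $0$ as $N\to\infty$.

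Hypotheses~(2) and~(4) serve to replace the almost-holomorphic $h$ by genuinely holomorphic data without displacing the contact set. At the singular points of $S$ --- the transverse double points and the open Whitney umbrellas --- where either smoothness of $X$ or strict plurisubharmonicity of $\phi$ can fail, hypothesis~4(i) provides honest holomorphicity of $h$, so no correction is needed there; at the remaining smooth points hypothesis~4(ii) supplies the strict plurisubharmonicity used above. Elsewhere I would set $\tilde h=h-u$, where $\dbar u=\dbar h$ is solved with the weight built from $\phi$. The estimate $\dbar h=O(\dist(\cdot,S)^{(3n+5)/2})$ of condition~(2) makes the weighted norm of the right-hand side finite and, after upgrading the $L^{2}$ bound to a pointwise one by Sobolev embedding and interior elliptic regularity for $\dbar$, forces $u$ to vanish on $S$ to order higher than that of $\rho$. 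Hence $\tilde h$ is holomorphic near $X$, the contact set $\{|\tilde h|=e^{\phi}\}$ is still exactly $X$, and $|\tilde h(a)|<e^{\phi(a)}$ persists.

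With holomorphic data in hand I would carry out the separation: prescribing a zero at $a$ and solving one further $\dbar$-problem with weight $\phi$ (strictly positive along $X$) produces a function holomorphic near $X\cup\{a\}$ that vanishes at $a$ yet is bounded away from $0$ on $X$; polynomial approximation together with Rouch\'e's theorem then delivers the algebraic hypersurface, so $X$ is rationally convex. The main obstacle is purely quantitative and sits in the previous step: the $\dbar$-correction must be controlled in a strong enough norm that it neither pushes the contact set off $X$ nor spoils the strict inequality at $a$. This is what pins down the exponent $(3n+5)/2$ in~(2): in real dimension $2n$ one loses on the order of $n$ derivatives in passing from the $L^{2}$ solution of $\dbar$ to a $C^{0}$ bound through Sobolev embedding, and several additional orders of vanishing are needed to keep the correction subordinate to the quadratic vanishing of $\rho$; balancing the two requirements yields precisely this exponent.
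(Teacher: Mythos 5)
The paper offers no proof of this lemma; it is imported from Duval--Sibony \cite{DS} and Gayet \cite{G}, so your attempt has to be measured against their argument. Your outline has the right ingredients --- H\"ormander's weighted $L^2$ estimates with weight built from $\phi$, condition (2) controlling the $\dbar$-correction, a forced zero at the point $a$ to be separated, and a final polynomial approximation plus normalization $P\mapsto P-P(a)$ --- but it omits the one mechanism that makes the estimates close: one does not correct $h$ itself, one corrects the high powers $h^N$ and lets $N\to\infty$. With $\tilde h=h-u$ and $\dbar u=\dbar h$ solved against a fixed weight, the correction $u$ has size of order a fixed constant; there is no small parameter, so nothing forces the contact set $\{|\tilde h|=e^{\phi}\}$ to remain equal to $X$, nor the strict inequality at $a$ to persist. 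In the actual proof one solves $\dbar u_N=\dbar(h^N)=Nh^{N-1}\dbar h$ in $L^2(e^{-2N\phi-2n\log|z-a|})$: the curvature $N\,dd^c\phi$ yields a constant $O(1/N)$ in the estimate, and the factor $|h|^{N-1}e^{-N\phi}=e^{-N(\phi-\log|h|)}|h|^{-1}$ concentrates like a Gaussian of width $N^{-1/2}$ around $X$, since (1), (3) and 4(ii) give $\phi-\log|h|$ comparable to $\dist(\cdot,S)^2$ near the relevant points, while 4(i) kills the right-hand side identically where that comparison is unavailable. Together with $|\dbar h|=O(\dist(\cdot,S)^{(3n+5)/2})$ this yields $\sup_X|u_N|e^{-N\phi}\to 0$, so $F_N=h^N-u_N$ is entire, vanishes at $a$, and satisfies $|F_N|\ge\tfrac12e^{N\phi}>0$ on $X$ for large $N$; only then does your last step (Taylor truncation and subtracting the value at $a$) produce the algebraic hypersurface.

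A second, related gap: you claim the H\"ormander solution $u$ ``vanishes on $S$ to order higher than that of $\rho$'' by Sobolev embedding and elliptic regularity. High-order vanishing of the data on $S$ does not transfer to pointwise vanishing of an $L^2$-minimal solution; what the argument actually extracts is the sup-norm bound $\sup|u_N|e^{-N\phi}=o(1)$, obtained from the $L^2$ bound via a sub-mean-value inequality on balls of radius $N^{-1/2}$ (this is where the real dimension $2n$, and hence the exponent $\tfrac{3n+5}{2}$, enters --- not through a loss of derivatives in Sobolev embedding). As written, your middle step both proves too much (pointwise high-order vanishing of $u$) and too little (no decay of the correction relative to $e^{\phi}$), so the contact-set and separation claims do not follow.
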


We remark that if follows from the proof of the lemma in~\cite{G} that in fact, we may assume
that $\phi$ is merely continuous at points where $h$ is holomorphic.

The proof of Proposition~\ref{t.1} consists of finding the functions $\phi$ and $h$ that satisfy 
Lemma~\ref{l.g} with $X=S$. This will be achieved in three steps: we first construct a closed 
$(1,1)$-form 
$\om$ that vanishes near singular points of $S$ and such that $\om|_S=0$. The form $\om$ is a modification of the standard symplectic form $\om_{\rm st}$ in $\cx^2$ near singular points of $S$. Near self-intersection points this is done in the paper of Gayet~\cite{G}, and so we will deal with the umbrella points. Secondly, from $\om$ and its potential $\phi$ we construct the required function $h$. In the last step 
we replace $\phi$ with a function $\phi + \rho$, for a suitable $\rho$, so that the pair $\{\phi+\rho, h\}$ 
satisfies all the conditions of Lemma~\ref{l.g}. 
\medskip

\noindent {\it Step 1: the form $\omega$}. 
Our modification of the form $\omega_{st}$ and its potential is an inductive procedure on the
umbrella points. Let $p_1, \dots, p_m$ be the umbrella points on $S$, $p_j =(x_j,u_j,y_j,v_j)$.
By the assumption in Theorem~\ref{t.1}, after a translation of $p_j$ to the origin, the surface $S$ 
is parametrized near $p_j$ by the mapping $\pi$ given by~\eqref{st-umb}. Let $L_j : (z,w) \to (z,w) -p_j$ 
be the translation of $p_j$ to the origin, so that $ \pi_j = L_j^{-1} \circ \pi$ parametrizes $S$ near $p_j$.

For a function $f$ we have $d^c f = -f_y dx + f_x dy - f_v du + f_u dv$. Using this we have 
$\pi^*d^c \phi_{\rm st} = -2t^2 s dt - \frac{2}{3}t^3 ds$. Consider the pluriharmonic function
$\zeta_1 = \frac{v^2}{2} - \frac{u^2}{2}$.
Then $\pi^*d^c \zeta_1 = \pi^*d^c \phi_{\rm st}$. The function $\phi_{\rm st} - \zeta_1$ 
is strictly plurisubharmonic and satisfies 
\begin{equation}\label{e.*}
\pi^*d^c (\phi_{\rm st} - \zeta_1)=0.
\end{equation}
Let $\phi_1 = (\phi_{st} - \zeta_1) \circ L_1$. Since $L_j$ are $\cx$-linear, they commute with $d^c$.
Therefore, $d^c\phi_1 \vert S = 0$ near $p_1$ and $dd^c \phi_1 = \omega_{st}$. 
Let $r: \rl^{+} \to \rl^+$ be a smooth increasing convex function such that $r(t)=0$
when $t\le\e_1$ and $r(t) = t -c$ when $t>\e_2$, for some suitably chosen $c>0$ and $0<\e_1<\e_2$. 
We choose $\e_2>0$ so small that the set $\{\phi_1<\e_2\}$ does not contain any singular points
of $S$ except $p_1$. Let
\begin{equation}\label{e.ome}
\tilde \phi_1 = r\circ \phi_1, \ \  \om_1 = dd^c(\tilde\phi_1).
\end{equation}
Then $\pi^*\omega_1 =0$ by~\eqref{e.*}. Therefore, the surface $S$ remains Lagrangian with respect to the 
form $\omega_1$. This gives us the required modification of $\om_{\rm st}$ near $p_1$. Note that our 
construction gives two neighbourhoods $U_1\Subset U'_1$ of $p_1$, which can be chosen arbitrarily small, 
so that $\om_1|_{U_1}=0$ and $\omega_1 =\omega_{\rm st}$ in $\cx^2 \setminus U'_1$. On the other hand, the potential $\tilde\phi_1$ is a global modification of $\phi_{st}$ but it remains plurisubharmonic on 
$\cx^2$. 

Consider now the modification of $\tilde\phi_1$ and $\omega_1$ near $p_2$. Up to an additive constant the potential $\tilde\phi_1$ for $\omega_1$ near $p_2$ agrees with $(\phi_{st} - \zeta_1) \circ L_1$. We construct 
$\phi_2$ in the form
$$
\phi_2 = (\tilde\phi_1 - \zeta_2) \circ L' + C,
$$
with a suitable choice of a function $\zeta_2$ and a constant $C$. 
The condition $\pi_2^*d^c \phi_2 =0$ is equivalent to 
$$
\pi^*d^c \left((\phi_st - \zeta_1)\circ L_1 - \zeta_2 \right) = 0 .
$$
This can be achieved by choosing
$$
\zeta_2 = -2x_1x - 2y_1y - v_1v - 3u_1u .
$$
Then $d^c\phi_2 \vert S = 0$ near $p_2$. Further, $\phi_2(p_2) = 0$ by a suitable choice of the constant $C$, 
and $dd^c\phi_2 = \omega_1$. Now take $\tilde\phi_2 = r \circ \phi_2$, where $r$ is as above, and set
$\omega_2 = dd^c \tilde\phi_2$. This gives the required modification near $p_2$. 

This procedure can be repeated for all other $p_j$, $j=2,\dots,m$. Note that at each step the modification
of the function $\tilde \phi_{j-1}$ is obtained by adding linear terms in $(x,u,y,v)$ precomposed with a 
translation. This ensures that the form $\omega_j$ remains unchanged in the complement of some 
small neighbourhood $U'_j$ of the point $p_j$. For the same reason, the function $\tilde\phi_j$ remains
globally plurisubharmonic, which is, in fact, strictly plurisubharmonic outside the union of the neighbourhoods 
$U'_j$.  We repeat this procedure $m$ times for all umbrella points to obtain the function $\tilde \phi$ and
the form $\tilde\omega$.

Denote by $p_{m+1}, \dots, p_N$ the double points of $S$. Then \cite[Prop. 1]{G} gives further modification of the form $\tilde\omega$ and its potential $\tilde\phi$ near the double points. 
Combining everything together yields the following result.

\begin{lemma}\label{l.omega}
Given $\e>0$ sufficiently small, there exists a $(1,1)$-form $\tilde \om$ and $0<\e'<\e$ 
such that 
\begin{itemize}
	\item[(i)] $\tilde \om|_S = 0$;
	\item[(ii)] $\tilde \om = \om$ on 
	$\cx^2 \setminus \left(\cup_{j=1}^N \B(p_j, \e)\right)$.
	\item[(iii)] $\tilde \om$ vanishes on  
	$\mathbb B(p_j,\e')$, $j=1,\dots,N$.
	
\end{itemize}
Furthermore, there exists a smooth function $\tilde \phi$ on $\cx^2$ such that $dd^c \tilde \phi = \tilde \omega$.
The function $\tilde \phi$ is plurisubharmonic on $\mathbb C^2$, and strictly plurisubharmonic on 
$\cx^2 \setminus \left(\cup_{j=1}^N \B(p_j, \e)\right)$.
\end{lemma}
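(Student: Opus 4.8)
The plan is to show that the form $\tilde\om$ and potential $\tilde\phi$ produced by the inductive modification above satisfy (i)--(iii) together with the (strict) plurisubharmonicity claims, the whole difficulty being to run the induction over the singular points so that all of these survive simultaneously. I would organize the argument as a single inductive step applied first to the umbrella points $p_1,\dots,p_m$ and then, via Gayet \cite[Prop. 1]{G}, to the double points $p_{m+1},\dots,p_N$.

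For the base step at an umbrella point $p_1$ translated to the origin, where $S$ is parametrized by the standard umbrella \eqref{st-umb}, the mechanism is to correct the potential by a \emph{pluriharmonic} function before flattening. Computing $\pi^* d^c\phi_{\rm st}$ and choosing $\zeta_1 = \frac{v^2}{2} - \frac{u^2}{2}$ so that $\pi^* d^c\zeta_1 = \pi^* d^c\phi_{\rm st}$, the function $\phi_{\rm st} - \zeta_1$ has the two features I need: its $dd^c$ is still $\om_{\rm st}$ (subtracting a pluriharmonic function does not change the form, so strict plurisubharmonicity is untouched), while its $d^c$ now restricts to zero on $S$, which is exactly \eqref{e.*}. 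Composing with the translation gives $\phi_1$, and flattening by $\tilde\phi_1 = r\circ\phi_1$ for a convex increasing $r$ that is constant on $[0,\e_1]$ and equals $t-c$ for $t>\e_2$ produces $\om_1 = dd^c\tilde\phi_1$. Convexity of $r$ keeps $\tilde\phi_1$ plurisubharmonic; $\tilde\phi_1$ is constant near $p_1$, so $\om_1$ vanishes there, and it equals $\phi_1$ up to a constant far from $p_1$, so $\om_1 = \om_{\rm st}$ there. The Lagrangian property $\om_1|_S = 0$ near $p_1$ follows by expanding $dd^c(r\circ\phi_1) = r''(\phi_1)\,d\phi_1\wedge d^c\phi_1 + r'(\phi_1)\,dd^c\phi_1$ and restricting to $S$: the first term dies because $d^c\phi_1|_S = 0$ and the second because $dd^c\phi_1|_S = \om_{\rm st}|_S = 0$.

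For the inductive step at $p_j$ I would exploit that the earlier flattenings are supported in disjoint neighborhoods, so near $p_j$ the current form is still $\om_{\rm st}$ and its local potential is strictly plurisubharmonic with $dd^c = \om_{\rm st}$. The key point is that the pluriharmonic correction needed to force $d^c|_S = 0$ can now be taken \emph{linear} --- as with $\zeta_2 = -2x_1x - 2y_1y - v_1 v - 3u_1 u$ --- after which the same flattening by $r$ applies verbatim. Because each correction is linear (hence pluriharmonic) and each flattening is compactly supported in an arbitrarily small neighborhood of $p_j$, iterating $m$ times leaves the form equal to $\om_{\rm st}$ off $\cup_j \B(p_j,\e)$ and keeps the potential globally plurisubharmonic, strictly so off the small neighborhoods. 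The double points are then handled by the local modification of \cite[Prop. 1]{G}, which is compatible for the same reasons.

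The step I expect to be the main obstacle is precisely this compatibility: I must be sure that treating $p_j$ neither resurrects the form near the already-flattened points $p_1,\dots,p_{j-1}$ nor destroys global plurisubharmonicity. Both are controlled by the two structural facts used above --- corrections are pluriharmonic, so $dd^c$ and hence positivity is preserved, and flattenings are supported in shrinking disjoint balls. Granting this, the stated conclusions fall out: (iii) from the local constancy of $\tilde\phi$ near each $p_j$ (taking $\e'$ below every flattening radius), (ii) from disjointness of supports (taking $\e$ above every support radius), and (i) by combining $\tilde\om|_S = 0$ near each singular point with $\om_{\rm st}|_S = 0$ on the regular part, where $\tilde\om = \om_{\rm st}$ and $S$ is a genuine Lagrangian embedding.
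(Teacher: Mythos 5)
Your proposal is correct and follows essentially the same route as the paper: the pluriharmonic correction $\zeta_j$ (quadratic at the first step, linear thereafter) to kill $\iota^* d^c$ of the potential, followed by the convex flattening $r\circ\phi_j$ supported in small disjoint balls, with Gayet's Proposition~1 handling the double points. The only addition is your explicit expansion of $dd^c(r\circ\phi_1)$, which the paper leaves implicit but which matches its reasoning.
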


\medskip

\noindent{\it Step 2: the function $h$.}
Let $\iota: S \to \cx^2$ be a Lagrangian inclusion, and $\tilde\phi$ be the potential of the form $\tilde\omega$ 
given by Lemma~\ref{l.omega}. For simplicity we drop tilde from the notation.
We recall the construction in \cite{DS} and \cite{G} of a smooth function $h$ on $\cx^2$ such that 
$|h|\left|_S = e^\phi \right.$ and $\dbar h(z) = O(\dist(z,S)^6)$.   

Let $\tilde S$ be a deformation retract of $S$. Note that it exists because near an umbrella 
point the surface $S$ is the graph of a continuous vector-function. Let $\gamma_k$, 
$k=1,\dots,l$, be the basis in $H_1(\tilde S, \mathbb Z) \cong H_1(S, \mathbb Z)$ supported on 
$S$. Using de Rham's theorem one can find closed forms $\beta_k$ on $\tilde S$ such that 
$\int_{\gamma_\nu}\beta_k = \delta_{\nu k}$, and such that $\beta_k$ vanish in the balls 
$\B(p_j,\e)$ as in Lemma~\ref{l.omega} around the singularities of $S$. 
Further, there exist smooth functions $\psi_k$ with compact support in $\tilde S$ such that 
$\psi_k$ vanish on $S\cup(\cup_{j=1}^N \B(p_j,\e))$, and for $k=1,\dots,l$,
\begin{equation}\label{e.ttt}
\iota^* d^c \phi_k = \iota^* \beta_k .
\end{equation}
Indeed, for each $k$, we set $\phi_k= A(z,w) r_1 + B(z,w) r_2$, where $r_{1}(z,w)$
and $r_{1}(z,w)$ are local defining functions of $S$ and $A, B$ are some unknown functions. 
Plugging this expression into~\eqref{e.ttt} gives a linear system for the restrictions of $A$ and 
$B$ to $S$ that can be solved. A suitable extension of this solution with support in $\tilde S$ 
gives the result. Note that near singular points the extension is identically
zero.

For $\lambda_k>0$ the function $\phi+ \sum_{j=1}^l \lambda_k \psi_k$ agrees
with $\phi$ on $S$. For sufficiently small $\lambda_k$ it is strictly plurisubharmonic 
outside the balls $\B(p_j,\e)$ and globally plurisubharmonic since the functions $\psi_k$
vanish in $\B(p_j,\e)$. Further, there exists a choice of $\lambda_k$ and $M>0$ 
such that for the function 
\begin{equation}\label{e.phi-til}
\tilde \phi = M \left(\phi+ \sum_{j=1}^l \lambda_j \psi_j \right)
\end{equation}
the form $\iota^* d^c\tilde\phi$ is closed on $S$ and has periods which are multiples of $2\pi$. Then
there exists a $C^\infty$-smooth function $\mu: S \to \rl/2\pi\mathbb Z$ that vanishes on the intersection
of $S$ with $\B(p_j,\e)$, $j=1,\dots,N$, and such that $\iota^* d^c \tilde \phi = d\mu$. By~\cite{HW}, there exists a 
function $h$ defined on $\cx^2$ such that 
$$
h|_S = e^{\tilde \phi + i\mu}|_S
$$
and $\dbar h(z) = O(\dist(z,S)^6)$. It follows that $\tilde \phi - \log|h|$ vanishes to order $1$ on $S$.
Note that $h$ is constant near singular points of $S$. Finally, the function $h$ can be suitably 
extended to $\cx^2$ preserving the inequality given by (1) in Lemma~\ref{l.g}.

\medskip

\noindent{\it Step 3: the function $\phi$.}    A closed subset $K$ in $\cx^n$ is called {\it locally polynomially convex} near a point $p \in K$ if for every sufficiently small $\varepsilon > 0$ the intersection $K \cap \overline{\B(p,\varepsilon)}$ is polynomially convex in $\cx^n$.
Again, for simplicity of notation we denote by $\phi$ the function~\eqref{e.phi-til} constructed in 
Step 2. It does not yet satisfy the conditions of Lemma~\ref{l.g}
because there are still some smooth points on $S$ where the function $h$ is not holomorphic
and $\phi$ is not strictly plurisubharmonic. For this we will replace $\phi$ by a 
function $\tilde \phi = \phi + c\cdot\rho$, where the function $\rho$ will be constructed using local 
polynomial convexity of $S$, and $c>0$ will be a suitable constant.

We recall our result from~\cite{SS1,SS2}.

\begin{lemma}\label{l.FS} 
Let $S$ be a Lagrangian inclusion in $\cx^2$, and let $p_0,\dots, p_N$ be its singular points. 
Suppose that $S$ is locally polynomially convex near every singular point. Then there exists a neighbourhood 
$\Omega$ of $S$ in $\cx^2$ and a continuous non-negative plurisubharmonic function $\rho$  on $\Omega$ 
such that $S\cap \Omega = \{ p \in \Omega: \rho(p) = 0 \}$. Furthermore, for every $\delta > 0$ one can 
choose $\rho = (\dist(z,S))^2$ on $\Omega \setminus \cup_{j=1}^N \B(p_j,\delta)$; in particular, it is 
smooth and strictly plurisubharmonic there.
\end{lemma}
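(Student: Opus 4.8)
The plan is to assemble $\rho$ from a \emph{global} model valid near the smooth totally real locus of $S$ and from \emph{local} models near the singular points supplied by polynomial convexity, glued by taking maxima of plurisubharmonic functions. First I would treat the smooth part: since $\iota^*\omega_{\rm st}=0$, at each of its smooth points $S$ is a maximal totally real submanifold of $\cx^2$, and for such a submanifold the squared distance $\dist(\cdot,S)^2$ is classically smooth and strictly plurisubharmonic on a sufficiently thin tube about the smooth locus. Fixing $\delta>0$, I would choose $\Omega$ thin enough that $\rho_0:=\dist(\cdot,S)^2$ is strictly plurisubharmonic on $\Omega\setminus\bigcup_j\B(p_j,\delta/2)$; there it is non-negative and vanishes exactly on $S$. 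This already supplies the function required on $\Omega\setminus\bigcup_j\B(p_j,\delta)$ by the final assertion, and serves as the model onto which the local pieces are glued. Only neighbourhoods of the tips $p_j$ remain, where total reality degenerates and $\dist^2$ is no longer plurisubharmonic.

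Next I would use the hypothesis to produce a local defining function at each $p_j$. For small $\e$ the set $K_j:=S\cap\overline{\B(p_j,\e)}$ is polynomially convex, so for every point of $\overline{\B(p_j,\e)}\setminus K_j$ there is a polynomial $P$ with $\|P\|_{K_j}\le1<|P|$ at that point; by compactness a countable family $\{P_i\}$ with $\|P_i\|_{K_j}\le1$ separates all such points from $K_j$. Then $\rho_j:=\sum_i c_i\max(|P_i|-1,0)$, with $c_i>0$ chosen for local uniform convergence, is continuous, non-negative and plurisubharmonic, vanishes on $K_j$, and is positive on $\B(p_j,\e)\setminus S$; equivalently one may take the Siciak extremal function of $K_j$. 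Thus $\{\rho_j=0\}\cap\B(p_j,\e)=S\cap\B(p_j,\e)$, giving the desired local plurisubharmonic model.

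Finally I would glue $\rho_0$ and the $\rho_j$. On the shell $\{\delta/2\le|z-p_j|\le\e\}$ both functions are plurisubharmonic, so any maximum of rescalings of them is again plurisubharmonic, and since each vanishes precisely on $S$ and is positive off it, so does the maximum. I would set $\rho$ equal to $\dist^2$ for $|z-p_j|\ge\delta$, to $\lambda_j\rho_j$ for $|z-p_j|\le\delta/2$, and to $\max(\dist^2,\lambda_j\rho_j)$ in between, with $\lambda_j>0$ arranged so that $\lambda_j\rho_j\le\dist^2$ near the outer radius and $\lambda_j\rho_j\ge\dist^2$ near the inner radius; then the two single-function pieces glue plurisubharmonically onto the maximum, and the patched function is globally continuous, non-negative, plurisubharmonic, vanishes exactly on $S$, and equals $\dist^2$ off $\bigcup_j\B(p_j,\delta)$.

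The main obstacle is realising those two inequalities simultaneously: on $S$ both functions vanish, so a single constant $\lambda_j$ cannot reverse their ratio there. I expect to overcome this by matching orders of vanishing --- replacing $\rho_j$ by an increasing convex function of itself, which keeps it non-negative and plurisubharmonic while forcing quadratic vanishing comparable to $\dist^2$ near $S$ --- together with the strict plurisubharmonicity of $\dist^2$ on the shell, whose positive margin absorbs a radial modulation that drives the required change of sign of $\dist^2-\lambda_j\rho_j$ as the radius decreases. Carrying out these estimates at each tip and combining over $j$ then yields the global $\rho$ with all the asserted properties.
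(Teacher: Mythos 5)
A preliminary remark: the paper does not prove this lemma itself but refers to \cite{SS2}, so the comparison below is with the construction that the reference carries out. Your overall architecture --- $\dist(\cdot,S)^2$ on a thin tube around the totally real locus, a local non-negative continuous plurisubharmonic $\rho_j$ near each $p_j$ built from separating polynomials for the polynomially convex compact $K_j=S\cap\overline{\B(p_j,\e)}$, and a gluing by maxima on a shell --- is the right one, and your first two steps are sound.

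The gap is in the inner matching inequality. For your patched function to be plurisubharmonic you need $\lambda_j\rho_j\ge\dist^2$ on a full neighbourhood, inside $\Omega$, of the inner interface, and that requires a lower bound of the form $\rho_j\ge c\,\dist(\cdot,S)^2$. The construction $\rho_j=\sum_i c_i\max(|P_i|-1,0)$ provides no such bound: it only guarantees $\rho_j>0$ off $K_j$, and $\rho_j$ may vanish to arbitrarily high (even infinite) order in the directions normal to $S$. Your proposed remedy --- composing $\rho_j$ with an increasing convex function to ``match orders of vanishing'' --- goes the wrong way: for convex increasing $\chi$ with $\chi(0)=0$ one has $\chi(t)\le Ct$ on bounded sets, so the composition can only raise the order of vanishing. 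This helps with the \emph{outer} inequality $\lambda_j\rho_j\le\dist^2$ (where $\chi(t)=t^2$ together with the Lipschitz bound $\rho_j\le L\,\dist(\cdot,S)$ near the outer sphere does the job), but it cannot produce the lower bound needed at the inner interface, and a concave reparametrization, which would lower the order, destroys plurisubharmonicity. The correct fix, which your final sentence gestures at without landing on, is to make the inner inequality hold for trivial reasons: replace $\dist^2$ on the shell by $\dist^2+\sigma g(|z-p_j|^2)$ with $g$ radial, $g=0$ near the outer sphere and $g=-1$ near the inner one, $\sigma>0$ small enough that the strict plurisubharmonicity of $\dist^2$ on the shell absorbs the concavity of $g$; and then \emph{shrink $\Omega$ so that $\dist(\cdot,S)^2<\sigma$ throughout}. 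Near the inner sphere the modulated function is then $\le 0\le\lambda_j\rho_j$ at every point of $\Omega$, so the maximum equals $\lambda_j\rho_j$ there with no lower bound on $\rho_j$ required, while near the outer sphere $g=0$ and the maximum equals $\dist^2$ by the Lipschitz estimate; the glued function remains non-negative with zero set exactly $S$ because it dominates $\lambda_j\rho_j\ge0$ on the shell. Without this thinning of $\Omega$ against the size of the radial drop, the gluing as you describe it does not close.
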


The standard open Whitney umbrella is  locally polynomially convex by \cite{SS1}, and $S$ is locally polynomially
convex near transverse double self-intersection points by~\cite{SS2}. For the proof of the lemma
we refer the reader to~\cite{SS2}.

To complete the construction of the function $\phi$, we choose the function $\rho$ in Lemma~\ref{l.FS} 
with $\delta>0$ so small that the balls $\mathbb B(p_j, \delta)$ are contained in balls $\mathbb B(p_j, \e'/2)$ 
given by Lemma~\ref{l.omega}. Note that $\rho$ is defined only in a 
neighbourhood $\Omega$ of $S$, but we can extend it as a smooth function with compact support in $\cx^2$. 
Consider now the function 
$$
\tilde \phi = \phi + c\cdot \rho .
$$
We choose the constant $c>0$ so small that the
function $\tilde \phi$ remains to be plurisubharmonic on $\cx^2$. At the same time, since $c>0$ and $\rho$ is
strictly plurisubharmonic on $S$ outside small neighbourhoods of singular points, we conclude that the function 
$\tilde \phi$ is strictly plurisubharmonic outside the balls $\mathbb B(p_j, \delta)$. It also follows 
that $X = \{ |h|=e^{\tilde\phi} \} = S$. The pair $\tilde\phi$ and $h$ now satisfies all the conditions 
of Lemma~\ref{l.g}. This completes the proof of Proposition~\ref{t.1}.

\medskip

For the proof of Theorem~\ref{t.2} we will also need  the following result.

\begin{cor}\label{c.2}
Suppose that $\iota : S \to \cx^2$ is a Lagrangian inclusion of a compact surface. Then
$\iota(S)$ admits a Stein neighbourhood basis. 
\end{cor}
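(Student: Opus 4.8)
The plan is to build the Stein neighbourhoods directly from the plurisubharmonic defining function supplied by Lemma~\ref{l.FS}, exploiting the elementary fact that sublevel sets of a continuous plurisubharmonic function are pseudoconvex. The surface $S=\iota(S)$ is locally polynomially convex near each of its singular points---at an open Whitney umbrella by \cite{SS1} and at a transverse double point by \cite{SS2}, after arranging the umbrellas to be standard as in Proposition~\ref{p.stand} if necessary---so the hypotheses of Lemma~\ref{l.FS} are met. That lemma then furnishes a neighbourhood $\Omega$ of $S$ in $\cx^2$ together with a continuous non-negative plurisubharmonic function $\rho$ on $\Omega$ for which $S=\{p\in\Omega:\rho(p)=0\}$.

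Next I would show that suitable sublevel sets of $\rho$ form the required basis. Fix an arbitrary neighbourhood $W$ of $S$ and choose a relatively compact open set $\Omega'$ with $S\subset\Omega'\Subset W\cap\Omega$. Since $\partial\Omega'$ is compact and disjoint from $S=\{\rho=0\}$, the number $m=\min_{\partial\Omega'}\rho$ is strictly positive. For $0<\e<m$ put $D_\e=\{p\in\Omega':\rho(p)<\e\}$. Because $\rho\ge m>\e$ on $\partial\Omega'$, the closure $\overline{D_\e}$ is contained in $\Omega'$, so $D_\e\Subset W$ is a relatively compact neighbourhood of $S$ whose topological boundary lies in $\{\rho=\e\}$; as $\e\to 0$ these sets shrink down to $S$, so they do exhaust every neighbourhood of $S$.

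It remains to check that each $D_\e$ is Stein. As $t\mapsto -\log(\e-t)$ is convex and increasing on $(-\infty,\e)$ and $\rho$ is plurisubharmonic, the composition $-\log(\e-\rho)$ is a continuous plurisubharmonic function on $D_\e$; and since $\rho(p)\to\e^-$ as $p\to\partial D_\e$, it is an exhaustion of $D_\e$. Hence $D_\e$ is pseudoconvex, and by the solution of the Levi problem it is Stein. Thus $\{D_\e\}$ is a Stein neighbourhood basis of $\iota(S)$.

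The one place where care is genuinely needed is that $\rho$ is merely plurisubharmonic, not strictly so, near the umbrella and double points, so one cannot invoke a strictly-plurisubharmonic-exhaustion criterion directly. The argument sidesteps this precisely because pseudoconvexity of a sublevel set requires only plurisubharmonicity of the defining function; the strict plurisubharmonicity of $\rho$ away from the singular points, although available from Lemma~\ref{l.FS}, plays no role here. The remaining bookkeeping---keeping the sublevel sets relatively compact inside $\Omega$ and ensuring they contract to $S$---is handled, as above, by the compactness of $S$ and the continuity of $\rho$.
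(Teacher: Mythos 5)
Your proof is correct and takes essentially the same route as the paper, whose entire argument is to take the sublevel sets $\{\rho<\varepsilon\}$ of the function $\rho$ supplied by Lemma~\ref{l.FS} for $\varepsilon>0$ small. Your write-up merely supplies the verification the paper leaves implicit: relative compactness of the sublevel sets, the continuous plurisubharmonic exhaustion $-\log(\varepsilon-\rho)$, and the appeal to the Levi problem.
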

Indeed, one can take neighbourhoods of $\iota(S)$ of the form $\{ \rho < \varepsilon \}$ where 
$\rho$ is a function  given by Lemma \ref{l.FS} and $\varepsilon > 0$ is small enough.

\section{Rational approximation on surfaces}

The classical Oka-Weil theorem (see, e.g.,~\cite{St}) states that any holomorphic function 
in a neighbourhood of a rationally convex compact set $X\subset \cx^n$ can be approximated 
uniformly on $X$ by rational functions with poles off $X$. Rational functions can be replaced by holomorphic 
polynomials if $X$ is polynomially convex. We will need the following approximation result, 
which is due to O'Farrel-Preskenis-Walsch \cite{FPW} (see also Stout~\cite{St}):

\medskip

\noindent{\it
Let $X$ be a compact holomorphically convex set in $\cx^n$, and let $X_0$ be a closed subset of
$X$ for which $X\setminus X_0$ is a totally real subset of the manifold $\cx^n\setminus X_0$.
A function $f\in C(X)$ can be approximated uniformly on $X$ by functions holomorphic on an 
neighbourhood of $X$ if and only if $f|_{X_0}$ can be approximated uniformly on $X_0$ by functions
holomorphic on an neighbourhood of~$X$.
}

\medskip

Recall that a set $X$ is called a {\it totally real set} of a manifold $\mathcal M$ if there is
a neighbourhood $U$ of $X$ in $\mathcal M$ on which is defined a nonnegative strictly
plurisubharmonic function $\phi$ of class $C^2$ such that $X=\{p\in U : \phi(p)=0\}$.
The following result can be found in Stout~\cite[Thm 6.2.9]{St}:

\medskip

\noindent{\it
A compact connected subset $X$ of a Stein manifold $\mathcal M$ is holomorphically convex
if and only if there is a sequence $\Omega_j$ of domains in $\mathcal M$ with $\Omega_j \supset
\Omega_k$, when $j\le k$, and with $\bigcap_j \Omega_j = X$ such that if for each $j$, 
$(\tilde \Omega_j, {\rm proj}_j)$ is the envelope of holomorphy of $\Omega_j$, then 
$\bigcap_j {\rm proj}_j(\tilde\Omega_j)=X$.
}

\medskip

Suppose now that $X=\iota(S)$ is a Lagrangian inclusion given by Proposition \ref{p.stand}; it is rationally convex by  Proposition~\ref{t.1}. 
Let $X_0$ be the set of singular points of $X$, i.e., the set of double points and
 Whitney umbrellas. Then $X\setminus X_0$ is a smooth totally real submanifold, and so
for each point $p\in X\setminus X_0$ there exists a neighbourhood in which the square of the 
distance to $X$ is a strictly plurisubharmonic function. From these neighbourhoods we can
construct a neighbourhood $U\supset X \setminus X_0$ with a nonnegative strictly plurisubharmonic 
function on it that vanishes on $X\setminus X_0$. This shows that $X\setminus X_0$ is a totally 
real set in $\cx^2 \setminus X_0$.

The set $X_0$ is finite, hence it satisfies the assumption of of
O'Farrel-Preskenis-Walsch theorem. By Lemma~\ref{c.2}, $X\subset \cx^2$ admits a Stein neighbourhood basis $\{\Omega_j\}_j$.
Each $\Omega_j$ is Stein, therefore, $\tilde \Omega_j = \Omega_j$, and it follows from above
that $X$ is holomorphically convex.  Thus, all conditions in the result of
O'Farrel-Preskenis-Walsch, stated above, are satisfied, and we conclude that any 
continuous function on $X$ can  be approximated by holomorphic functions in a neighbourhood of $X$, 
hence by rational functions as seen by the Oka-Weil theorem. Combining everything together gives the following.

\begin{prop}\label{p.cont}
If $\iota: S\to\cx^2$ is a Lagrangian inclusion with standard umbrellas, then any continuous
function on $\iota(S)$ can be approximated uniformly on $\iota(S)$ by rational functions with
poles off $\iota(S)$. 
\end{prop}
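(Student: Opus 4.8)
The plan is to obtain Proposition~\ref{p.cont} as a direct consequence of the three main ingredients already assembled in the excerpt: the rational convexity of $\iota(S)$ from Proposition~\ref{t.1}, the existence of a Stein neighbourhood basis from Corollary~\ref{c.2}, and the approximation theorem of O'Farrell--Preskenis--Walsh. The strategy is to verify that the hypotheses of the O'Farrell--Preskenis--Walsh theorem hold with $X=\iota(S)$ and $X_0$ the finite singular set, and then to pass from approximation by functions holomorphic near $X$ to approximation by rational functions via Oka--Weil.

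First I would set $X=\iota(S)$ and let $X_0$ be the (finite) set of double points and umbrella points. The first task is to check that $X$ is holomorphically convex. Since $X$ is rationally convex in $\cx^2$ by Proposition~\ref{t.1}, it is in particular holomorphically convex; alternatively, invoking Corollary~\ref{c.2}, $X$ admits a Stein neighbourhood basis $\{\Omega_j\}$, and because each $\Omega_j$ is already Stein one has $\tilde\Omega_j=\Omega_j$ and $\mathrm{proj}_j(\tilde\Omega_j)=\Omega_j$, so $\bigcap_j \mathrm{proj}_j(\tilde\Omega_j)=\bigcap_j\Omega_j=X$. By the Stout criterion quoted above this gives holomorphic convexity of $X$ directly. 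The second task is to verify that $X\setminus X_0$ is a totally real set in the manifold $\cx^2\setminus X_0$: at each smooth point $p\in X\setminus X_0$ the surface $S$ is a totally real submanifold, so the squared distance $\dist(\cdot,X)^2$ is strictly plurisubharmonic on a neighbourhood of $p$; patching these local functions together with a partition of unity yields a nonnegative strictly plurisubharmonic function on an open set $U\supset X\setminus X_0$ vanishing exactly on $X\setminus X_0$, which is the definition of a totally real set.

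With these two facts in hand, the O'Farrell--Preskenis--Walsh theorem applies. Its conclusion reduces approximation of an arbitrary $f\in C(X)$ by functions holomorphic near $X$ to approximation of $f|_{X_0}$ on the finite set $X_0$. Since $X_0$ is finite, every function on $X_0$ is trivially approximable (indeed realizable) by functions holomorphic on a neighbourhood of $X$, so the reduction is vacuous and the criterion is satisfied automatically. Hence every continuous function on $X$ is a uniform limit of functions holomorphic in a neighbourhood of $X$. Finally, because $X$ is rationally convex, the Oka--Weil theorem lets us approximate each such neighbourhood-holomorphic function uniformly on $X$ by rational functions with poles off $X$, and composing the two approximations gives the claimed rational approximation.

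I expect the one genuinely substantive point—everything else being a matter of citing the quoted theorems and the already-established Proposition~\ref{t.1} and Corollary~\ref{c.2}—to be the verification that $X\setminus X_0$ is totally real as a \emph{set} (not merely that $S$ is totally real as a submanifold) in $\cx^2\setminus X_0$, i.e.\ the globalization of the local strictly plurisubharmonic squared-distance functions into a single nonnegative strictly plurisubharmonic exhaustion-type function vanishing precisely on $X\setminus X_0$. Away from the singular set $X_0$ this is standard, but care is needed so that the patched function does not acquire spurious zeros and retains strict plurisubharmonicity on all of $U$; a partition of unity subordinate to a cover by the local neighbourhoods, together with the observation that a convex combination of strictly plurisubharmonic functions each vanishing on $X\setminus X_0$ is again strictly plurisubharmonic and vanishes exactly there, handles this cleanly.
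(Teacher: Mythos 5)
Your proposal is correct and follows essentially the same route as the paper: verify the O'Farrell--Preskenis--Walsh hypotheses with $X_0$ the finite singular set, establish holomorphic convexity of $\iota(S)$ via the Stein neighbourhood basis of Corollary~\ref{c.2} and Stout's criterion, check that $X\setminus X_0$ is a totally real set using the squared distance function near smooth points, and finish with Oka--Weil using the rational convexity from Proposition~\ref{t.1}. The only cosmetic difference is that you make the partition-of-unity patching and the triviality of approximation on the finite set $X_0$ explicit, which the paper leaves implicit.
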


With this the main result is easily verified. 

\begin{proof}[Proof of Theorem~\ref{t.2}]
(i) and (ii). By Proposition \ref{p.stand}, there exists a singular Lagrangian embedding $f=(f_1,f_2): S \to \cx^2$
with standard umbrellas as the only singularities. The required statements now follow from Proposition~\ref{p.cont}.

(iii) Formula~\eqref{e.wh} gives an immersion of the sphere $S^2$ into $\cx^2$ with one double point, but
this does not give the approximation result because the coordinate functions attain the same 
value at the double point. However, by the Borsuk-Ulam theorem (see, e.g.,~\cite{H}),
any continuous function $F: S^2 \to \rl^2$ has at least two antipodal points $p$ and $q$ on $S^2$ 
where it attains the same value. Hence, it can be approximated by rational functions but only after 
we apply a rotation of $S^2$ that sends $p$ and $q$ to the north and south poles of $S^2$, which are 
 the preimages of the double point.

(iv) A similar story holds for $\mathbb RP_2$, for which one needs two double points. Let
$f=(f_1,f_2): \rl P_2 \to \cx^2$ be the Lagrangian inclusion with two double points and one standard umbrella.
By the Whitney approximation theorem it suffices to approximate any smooth function 
$F: \mathbb RP_2 \to \cx$. Since $\mathbb RP_2$ cannot be diffeomorphic to any subset of $\cx$, a 
generic point in the image of $F$ will have at least two pre-images. Applying a diffeomorphism $\tau$ of $\rl P_2$ 
we may assume that there exist points $p_j, q_j \in \rl P_2$ such that $(F \circ\tau)(p_j)=
(F \circ \tau)(q_j)$, $j=1,2$, and $f_j(p_k)=f_j(q_k)$, $j,k=1,2$. Then by Proposition~\ref{p.cont}, $F \circ \tau$ can be approximated by rational 
combinations of $f_1$ and $f_2$.
\end{proof}


\end{document}